\@date \else {\vskip3ex \centering\footnotesize\@date\par\vskip1ex}\fi
\else \@footnotetext{\@setdate}\fi}
\def\namedlabel#1#2{\begingroup
	#2%
	\def\@currentlabel{#2}%
	\phantomsection\label{#1}\endgroup
}
\numberwithin{equation}{section}
 \newtheoremstyle{dotless}{}{}{\itshape}{}{\bfseries}{}{ }{}
\theoremstyle{dotless}
\newcommand{\labeltext}[2]{%
	\@bsphack
	\csname phantomsection\endcsname % in case hyperref is used
	\def\@currentlabel{#1}{\label{#2}}%
	\@esphack
}
\newenvironment{acknowledgements}{\ 
	
	{\textsl{Acknowledgements.}}}{}
\def\blfootnote{\gdef\@thefnmark{}\@footnotetext}
\newcommand{\N}[0]{\mathbb{N}}
\newcommand{\Z}[0]{\mathbb{Z}}
\newcommand{\Q}[0]{\mathbb{Q}}
\newcommand{\supp}[0]{\mathrm{supp}}
\newcommand{\brackets}[1]{\left( #1 \right)}
\newcommand{\setbr}[1]{\left\{ #1 \right\}}
\newcommand{\pow}[1]{\!\left(\!\left( #1 \right)\!\right)}
\newcommand\restr[2]{{
		\left.\kern-\nulldelimiterspace 
		#1
		\vphantom{\big|} 
		\right|_{#2}
}}
\newcommand{\vmin}[0]{v_{\min}}
\DeclareMathOperator{\Char}{char}
\newcommand{\bbF}{{\mathbb F}}
\newcommand{\bbK}{{\mathbb K}}
\newcommand{\K}{\mathbb K}
\newcommand{\bbZ}{{\mathbb Z}}
\newcommand{\cF}{{\mathcal F}}
\newcommand{\cP}{{\mathcal P}}
\newcommand{\cW}{{\mathcal W}}
\newcommand{\inv}{^{-1}}
\renewcommand{\epsilon}{\varepsilon}
\renewcommand{\phi}{\varphi}
\renewcommand{\theta}{\vartheta}
\theoremstyle{definition}
\newtheorem{defn}{Definition}[section]
\newtheorem{conditions}[defn]{Conditions}
\theoremstyle{plain}
\newtheorem{lemma}[defn]{Lemma}
\theoremstyle{plain}
\newtheorem*{lemma*}{Lemma}
\theoremstyle{plain}
\newtheorem{prop}[defn]{Proposition}
\theoremstyle{plain}
\theoremstyle{plain}
\newtheorem*{prop*}{Proposition}
\newtheorem{theorem}[defn]{Theorem}
\theoremstyle{plain}
\newtheorem*{teorema*}{Theorem}
\theoremstyle{plain}
\newtheorem{cor}[defn]{Corollary}
\theoremstyle{plain}
\newtheorem*{cor*}{Corollary}
\theoremstyle{definition}
\newtheorem{es}[defn]{Example}
\theoremstyle{definition}
\theoremstyle{plain}
\theoremstyle{definition}
\theoremstyle{definition}
\newtheorem{rmk}[defn]{Remark}
\theoremstyle{plain}
\theoremstyle{definition}
\newtheorem{notation}[defn]{Notation}
\lbrace\begin{array}{@{}l@{}}}%
\newcommand{\vertiii}[1]{{\left\vert\kern-0.25ex\left\vert\kern-0.25ex\left\vert #1 
		\right\vert\kern-0.25ex\right\vert\kern-0.25ex\right\vert}}
\newcounter{nootje}
\begin{document}

\begin{abstract}
	In this note, we study substructures of generalised power series fields induced by families of well-ordered subsets of the group of exponents. We characterise the  set-theoretic and algebraic properties of the induced substructures in terms of conditions on the families.
	We extend the work of Rayner by giving both \emph{necessary} and sufficient conditions to obtain truncation closed subgroups, subrings and subfields.
\end{abstract}

	\title[On Rayner Structures]{On Rayner Structures}
	
	\author[L.~S.~Krapp]{Lothar Sebastian Krapp}%\thanks{Supported by...}
	\author[S.~Kuhlmann]{Salma Kuhlmann}
	\author[M.~Serra]{Michele Serra}
	
	\address{Fachbereich Mathematik und Statistik, Universität Konstanz, 78457 Konstanz, Germany}
	\email{sebastian.krapp@uni-konstanz.de}
	
	\address{Fachbereich Mathematik und Statistik, Universität Konstanz, 78457 Konstanz, Germany}
	\email{salma.kuhlmann@uni-konstanz.de}
	
	\address{Fachbereich Mathematik und Statistik, Universität Konstanz, 78457 Konstanz, Germany}
	\email{michele.serra@uni-konstanz.de}
	
	\date{\today}
	
	\cleanlookdateon
	
	\maketitle
	
	\blfootnote{\textup{2020} \textit{Mathematics Subject Classification}: 13J05 (12J20 16W60 06F20)
		}
	
\section{Introduction}
	In his work \cite{rayner}, Rayner presented a construction method for subfields of generalised power series fields induced by families of well-ordered subsets of the value group. More specifically, for a field $k$, an ordered abelian group $G$ and a family $\cF$ of well-ordered subsets of $G$, Rayner introduced sufficient conditions on $\cF$ in order that the $k$-hull of $\cF$ (\thref{def:khull})
	forms a subfield of $k\pow{G}$. Here $k\pow{G}$ denotes the field of generalised power series with coefficient field $k$ and value group $G$.
	
	In this note, we study \emph{necessary} and sufficient conditions on the field $k$, the group $G$ and the family $\cF$, for the $k$-hull of $\cF$ to satisfy certain properties. By a careful analysis of these conditions, we characterise when the $k$-hull of $\cF$ is a subgroup (\thref{prop:group}), a subring (\thref{prop:ring}) and a subfield (\thref{prop:field2}) of $k\pow{G}$.
	Among these, Hahn fields (\thref{def:hahn}) are of special interest. In particular, Mourgues and Ressayre \cite{mourgues} studied the interesting class of truncation closed Hahn fields. In \thref{cor:hahn}, we characterise $k$-hulls that are truncation closed Hahn fields.
	Finally, we show in \thref{thm:main} that the $k$-hull of $\cF$ is a Rayner field (\thref{def:rayner}) if and only if it is a Hahn field.

\section{Preliminaries, terminology and notations}

	\textbf{Throughout this note, let $k$ be a field, let $G$ be an additive ordered abelian group and let $\cF$ be a family of well-ordered subsets of $G$.}
	For $A,B\subseteq G$ and $g\in G$ we denote by $\cW(A)$ the family of well-ordered subsets of $A$, by $\langle A\rangle$ the subgroup of $G$ generated by $A$, by $A\oplus B$ the set of sums $\{a+b\mid a\in A,b\in B\}$, by $A+g$ the translation set $\{a+g\mid a\in A\}$ and by 
	$\bigoplus_{n\in \N}A$ the set of finite sums of elements of $A$, i.e.\ $\bigoplus_{n\in \N}A=\setbr{\sum_{i=1}^na_i\mid n\in \N,  a_1,\ldots,a_n\in A}$ (where $\N=\{0,1,\ldots\}$). By convention, $\bigoplus_{n\in \N}\emptyset = \{0\}$.
	
	On $\cF$ we consider the following set theoretic (S) and algebraic (A) conditions.
	
	\begin{conditions}\thlabel{conditions}
		\begin{description}
			\item[\namedlabel{singletons}{(S1)}]
			$g\in G$ implies $\{g\}\in\cF $;
			\item[ \namedlabel{subsets}{(S2)}]
			$A\in \cF$ and $B\subseteq A$ implies $B\in \cF$;
			\item[ \namedlabel{union}{(S3)}]
			$A,B\in \cF$ implies $A\cup B \in \cF$;
			\item[ \namedlabel{zero}{(S4)}]
			$ \{0\} \in\cF$;
			\item[ \namedlabel{nonempty}{(S5)}]
			$ \cF \neq \emptyset $;
			\item[ \namedlabel{initial}{(S6)}]
			if $A\in \cF$ and $B$ is an initial segment of $ A$, then $B\in \cF$;
			\item[ \namedlabel{generated-group}{(A1)}]
			$\left\langle \bigcup_{A\in\cF}A\right\rangle=G$;
			\item[ \namedlabel{sums}{(A2)}]
			$ A,B\in\cF$ implies $A\oplus B\in\cF $;
			\item[ \namedlabel{translations}{(A3)}]
			$A \in \cF$ and $g \in G$ implies $A+g \in \cF$;
			\item[ \namedlabel{semigroup}{(A4)}]
			$A\in \cF$ and $A\subseteq G^{\geq0}$ implies $ \bigoplus_{n\in\N}A \in \cF$;
			\item[ \namedlabel{negatives}{(A5)}]
			if $g \in G$ such that $\{g\}\in \cF$, then $\{-g\}\in \cF$.
		\end{description}
	\end{conditions}

	\begin{rmk}
	\begin{enumerate}[(i)]
		\item
		Explicit implications are \ref{singletons} $ \Rightarrow $ \ref{zero} $ \Rightarrow $ \ref{nonempty}, \ref{subsets} $ \Rightarrow $ \ref{initial}, \ref{singletons} $ \Rightarrow $ \ref{generated-group} and \ref{singletons} $ \Rightarrow $ \ref{negatives}.
		\item 
		Other implications are implicit. See, for example, \thref{lemma:equivalences}.
	\end{enumerate}
	\end{rmk}

	We denote the generalised power series field with coefficient field $k$ and value group $G$ by $\K$. It consists of the set of all functions $s\colon G \to k$ whose \textbf{support} $\supp(s)=\{g\in G\mid s(g)\neq 0\}$ is a well-ordered subset of $G$. For any $g\in G$, we denote by $t^g$ the characteristic function mapping $g$ to $1$ and everything else to $0$, and we call $t^g$ a (monic) \textbf{monomial} of $\K$. This way, we can express a power series $s\in \K$ by $s=\sum_{g\in G}s_gt^g$, where $s_g=s(g)\in k$.
	For any power series $r,s\in \K$, their sum is given by $r+s=\sum_{g\in G}(r_g+s_g)t^g$ and their product by $rs=\sum_{g\in G}c_gt^g$ with $c_g=\sum_{h\in G}r_hs_{g-h}$. 
	Note that $ \supp(r+s)\subseteq \supp(r)\cup\supp(s) $ and $ \supp(rs)\subseteq\supp(r)\oplus\supp(s) $.
	These operations make $\K$ a field (cf.~\cite{hahn,neumann}). 
	
	\begin{defn}\thlabel{def:hahn}
		We call $ \K $ the \textbf{maximal Hahn field} with coefficient field $k$ and value group $G$. A subfield $K$ of $\K$ with $\{\alpha t^g\mid \alpha\in k, g\in G\}\subseteq K$ is called a \textbf{Hahn field} in $\K$. 
	\end{defn}

	Now we introduce the subsets of $\K$ induced by $\cF$ that we are going to study.

	\begin{defn}\thlabel{def:khull}
			We call the set $$k\pow{\cF} =\{ a\in \K \mid  \supp(a)\in\cF \} \subseteq \K$$ the $ k $\textbf{-hull} of $ \cF $ in $\K$.
	\end{defn}

	\begin{rmk}\thlabel{rmk:unitary-ring}
		Note that $ k\pow{\cF} $ contains the coefficient field $k$ if and only if $ \emptyset\in \cF$ and $\{0\}\in\cF $.
	\end{rmk}

	\begin{notation}\thlabel{not:wellorder}
		Whenever the family $ \cF $ is of the form $ \cW(S) $ for some set $ S \subseteq G$, we write $ k\pow{S} $ instead of $ k\pow{\cF} $.
	\end{notation}

	\begin{es}\thlabel{es:khulls}
		\begin{enumerate}[(i)]
			\item\label{es:khulls:1} In the valuation theoretic study of maximal Hahn fields, $k$-hulls play an important role, as they give rise to the valuation ring and its maximal ideal: for the standard valuation $\vmin\colon \K\setminus\setbr{0}\to G, s \mapsto \min \supp (s)$ on $\K$, the valuation ring is given by $k\pow{G^{\geq 0}}$ and its maximal ideal by $k\pow{G^{> 0}}$. 
			
			\item\label{es:khulls:2} Let $\kappa$ be an uncountable regular cardinal and let $\cF$ be the family of all well-ordered subsets of $G$ of cardinality less than $\kappa$.
			Then $k\pow{\cF}$ is the Hahn field of $\kappa$-bounded power series. Such Hahn fields provide natural constructions for models of real exponentiation (cf.\ \cite{kuhlmann,kuhlmannmatusinskimantova}).
		\end{enumerate}
	\end{es}
	
	We lastly introduce the notions of restriction and truncation closure for $k$-hulls.
	Due to  the work of Mourgues and Ressayre \cite{mourgues}, truncation closed subfields of maximal Hahn fields are of particular interest in the study of integer parts of ordered fields and have been the subject of study ever since (cf.\ e.g.\ \cite{biljakovic,fornasiero}). 
	
	\begin{defn}\thlabel{def:trunc-rest-closed}
		The $k$-hull $ k\pow{\cF} $ of $\cF$ is called \textbf{restriction closed} if $ \cF $ satisfies \ref{subsets}. It is called \textbf{truncation closed} if $ \cF $ satisfies \ref{initial}.
	\end{defn}

\section{Properties of $k$-hulls}

	We start by summarising the sufficient conditions on $\cF$ given in \cite{rayner} in order to ensure that $k\pow{\cF}$ has certain algebraic properties as the following theorem (cf.\ \cite[page~147]{rayner}).
	
	\begin{theorem}\thlabel{thm:raynermain}
		\begin{enumerate}[(i)]
			\item\label{thm:raynermain:1} If $\cF$ satisfies \ref{subsets}, \ref{union} and \ref{nonempty}, then $k\pow{\cF}$ is a subgroup of $(\K,+)$.
			
			\item If $\cF$ satisfies \ref{subsets}, \ref{union}, \ref{nonempty}, \ref{translations} and \ref{semigroup}, then $k\pow{\cF}$ is a subring (with identity) of $\K$.
			
			\item\label{thm:raynermain:3} If $\cF$ satisfies \ref{subsets}, \ref{union}, \ref{nonempty},   \ref{generated-group}, \ref{translations} and \ref{semigroup}, then $k\pow{\cF}$ is a subfield of $\K$.
		\end{enumerate}
	\end{theorem}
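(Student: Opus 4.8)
The plan is to prove the three parts in sequence, reusing throughout the two support inclusions recorded in the preliminaries, namely $\supp(a+b)\subseteq\supp(a)\cup\supp(b)$ and $\supp(ab)\subseteq\supp(a)\oplus\supp(b)$. In every case the strategy is the same: bound the support of the newly formed element by a set that the relevant closure conditions place in $\cF$, and then invoke \ref{subsets} to conclude. Thus the conditions split naturally according to which set operation they control: \ref{union} handles $\cup$ for addition, and \ref{translations} together with \ref{semigroup} will handle the sumset $\oplus$ and the $\N$-fold sum $\bigoplus_{n\in\N}$ needed for multiplication and inversion.

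For \ref{thm:raynermain:1} I would first check nonemptiness: \ref{nonempty} provides some $A\in\cF$, and \ref{subsets} then yields $\emptyset\in\cF$, so the zero series lies in $k\pow{\cF}$. Closure under addition is immediate from $\supp(a+b)\subseteq\supp(a)\cup\supp(b)$ together with \ref{union} and \ref{subsets}, and closure under negation is trivial since $\supp(-a)=\supp(a)$. For the subring statement I would produce the identity by applying \ref{semigroup} to $\emptyset\subseteq G^{\geq 0}$, giving $\{0\}=\bigoplus_{n\in\N}\emptyset\in\cF$ and hence $1=t^0\in k\pow{\cF}$. The substantive point is closure under multiplication: writing $A=\supp(a)$, $B=\supp(b)$, $\alpha=\min A$ and $\beta=\min B$, I would use \ref{translations} to pass to the nonnegative sets $A-\alpha,\,B-\beta\in\cF$, take their union $C\in\cF$ via \ref{union}, and apply \ref{semigroup} to obtain $\bigoplus_{n\in\N}C\in\cF$. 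Since $(A-\alpha)\oplus(B-\beta)\subseteq\bigoplus_{n\in\N}C$, condition \ref{subsets} captures this sumset, and translating back by $\alpha+\beta$ via \ref{translations} gives $A\oplus B\in\cF$; as $\supp(ab)\subseteq A\oplus B$, a final application of \ref{subsets} shows $ab\in k\pow{\cF}$.

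For \ref{thm:raynermain:3} the only remaining task, and the main obstacle, is closure under inverses. Given nonzero $a$ with $\gamma=\min\supp(a)$ and leading coefficient $c\in k^\times$, I would write $a=ct^\gamma(1-\epsilon)$ with $\epsilon=1-c^{-1}t^{-\gamma}a$, so that the coefficient of $\epsilon$ at $0$ vanishes and $\supp(\epsilon)\subseteq G^{>0}$; being a subset of $\supp(a)-\gamma\in\cF$, it lies in $\cF$ by \ref{translations} and \ref{subsets}. The Neumann series $\sum_{n\in\N}\epsilon^n$ is then summable in $\K$ and equals $(1-\epsilon)^{-1}$, so that $a^{-1}=c^{-1}t^{-\gamma}\sum_{n\in\N}\epsilon^n$. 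The key estimate is
$$\supp\Big(\sum_{n\in\N}\epsilon^n\Big)\subseteq\bigcup_{n\in\N}\supp(\epsilon^n)\subseteq\bigoplus_{n\in\N}\supp(\epsilon),$$
which converts the support of an infinite sum into the finite-sum closure of a single nonnegative set. This is exactly where \ref{semigroup} does the essential work: it places $\bigoplus_{n\in\N}\supp(\epsilon)$ in $\cF$, after which \ref{subsets} and a final translation by \ref{translations} yield $\supp(a^{-1})\in\cF$, i.e.\ $a^{-1}\in k\pow{\cF}$.

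The hard part is thus taming the support of an infinite geometric series by the $\N$-fold sum controlled by \ref{semigroup}; once the displayed inclusion is secured, the remainder is routine bookkeeping with \ref{subsets} and \ref{translations}. I also expect \ref{generated-group} to be redundant for the argument above: since \ref{semigroup} (applied to $\emptyset$) gives $\{0\}\in\cF$ and \ref{translations} then gives $\{g\}\in\cF$ for every $g\in G$, one has $\bigcup_{A\in\cF}A=G$ automatically. Its inclusion in \ref{thm:raynermain:3} presumably serves to make transparent that the value group of $k\pow{\cF}$ is all of $G$, rather than to secure closure under inverses.
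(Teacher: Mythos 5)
Your proof is correct, and although the paper does not prove this theorem itself (it is quoted from Rayner's article), every ingredient you use is exactly the one the paper deploys in its later lemmas: the support bounds combined with \ref{subsets} and \ref{union} for the additive part (as in \thref{prop:group} and \thref{lem:ring}), the translate-to-$G^{\geq 0}$ trick that extracts $A\oplus B$ from \ref{union}, \ref{semigroup} and \ref{translations} (precisely the argument of \thref{lemma:premain}), and Neumann's Lemma with $\supp\bigl((1-\epsilon)^{-1}\bigr)\subseteq\bigoplus_{n\in\N}\supp(\epsilon)$ for inverses (as in \thref{prop:field}). Your closing observation that \ref{generated-group} is redundant given the remaining hypotheses is also correct and is essentially the content of \thref{lemma:equivalences}; the only cosmetic omission is the trivial case $a=0$ or $b=0$ in the multiplication step, where $\min$ of the support is undefined but the product is $0$ anyway.
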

	
	\thref{thm:raynermain}~\eqref{thm:raynermain:3} gives rise to the following definition.
		
	\begin{defn}\thlabel{def:rayner}
		We call $\cF$ a \textbf{Rayner field family} in $G$ if it satisfies conditions
		\ref{subsets}, \ref{union}, \ref{nonempty},   \ref{generated-group}, \ref{translations} and \ref{semigroup}. 
		If $\cF$ is a Rayner field family in $G$, then we call the field $k\pow{\cF}$ a \textbf{Rayner field} (with coefficient field $k$ and field family $\cF$).
	\end{defn}

	\begin{rmk}\thlabel{rmk:trivialgroup}
		Rayner does actually not include \ref{nonempty} in his definition of a field family. However, if $G=\{0\}$, then the empty family would satisfy \ref{subsets}, \ref{union}, \ref{generated-group}, \ref{translations} and \ref{semigroup} but its $k$-hull would be the empty set. In fact by our definition, if $G=\{0\}$, then the only Rayner field family in $G$ is $\cF=\{\emptyset,\{0\}\}$.
	\end{rmk}

	Since Rayner is merely interested in sufficient conditions on $\cF$ in order to ensure that $k\pow{\cF}$ exhibits certain algebraic properties, some of the conditions he poses may not be necessary. In the following, we carefully analyse further the relations between the \thref{conditions} and the properties of $ k\pow{\cF} $ as an algebraic substructure of $\K$.

	\begin{prop}\thlabel{prop:group}
		Suppose that $k\neq \bbF_2$. Then $ k\pow{\cF} $ is a subgroup of $ (\K,+) $ if and only if $ \cF $ satisfies \ref{subsets}, \ref{union} and \ref{nonempty}.
	\end{prop}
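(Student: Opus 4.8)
The backward implication, that \ref{subsets}, \ref{union} and \ref{nonempty} force $k\pow{\cF}$ to be a subgroup of $(\K,+)$, is exactly \thref{thm:raynermain}~\eqref{thm:raynermain:1} and requires no hypothesis on $k$, so I would simply cite it and concentrate all the work on the forward direction. Assume then that $k\pow{\cF}$ is a subgroup. Condition \ref{nonempty} comes for free: a subgroup is nonempty and contains the neutral element $0\in\K$, whose support is $\emptyset$; hence $\emptyset\in\cF$ and in particular $\cF\neq\emptyset$. The substance lies in deriving \ref{subsets} and \ref{union}, and this is where $k\neq\bbF_2$ enters.

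For \ref{subsets}, given $A\in\cF$ and $B\subseteq A$, the plan is to realise $B$ as the support of a difference of two series both supported on all of $A$. Fix $c\in k\setminus\{0,1\}$, which exists precisely because $k\neq\bbF_2$ forces $|k|\geq 3$, and set $a=\sum_{g\in A}t^g$ and $a'=\sum_{g\in A\setminus B}t^g+\sum_{g\in B}c\,t^g$. Both have support $A\in\cF$, so both lie in $k\pow{\cF}$, and closure under subtraction gives $a-a'\in k\pow{\cF}$. Its coefficient at $g$ is $1-1=0$ for $g\in A\setminus B$ and $1-c\neq 0$ for $g\in B$, so $\supp(a-a')=B$, whence $B\in\cF$.

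For \ref{union}, given $A,B\in\cF$, the strategy is dual: realise $A\cup B$ as the support of a sum $a+b$ with $\supp(a)=A$ and $\supp(b)=B$, arranging that no cancellation occurs on the overlap $A\cap B$. Choosing $c\in k$ with $c\neq 0$ and $1+c\neq 0$ (again possible since $k\neq\bbF_2$), take $a=\sum_{g\in A}t^g$ and $b=\sum_{g\in B\setminus A}t^g+\sum_{g\in A\cap B}c\,t^g$. Then the coefficient of $a+b$ is $1$ on $A\setminus B$, $1$ on $B\setminus A$ and $1+c\neq 0$ on $A\cap B$, so $\supp(a+b)=A\cup B$, and closure under addition yields $A\cup B\in\cF$.

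The only genuine obstacle is the cancellation phenomenon exploited above, and it is exactly what makes the hypothesis $k\neq\bbF_2$ indispensable: over $\bbF_2$ every nonzero coefficient equals $1$, so that addition of two series corresponds to the \emph{symmetric difference} of their supports rather than to their union, and both constructions collapse. Having at least two distinct nonzero coefficients is precisely what allows me to prescribe the support of a sum or a difference at will. The remaining points — well-orderedness of $B$ and of $A\cup B$, and the degenerate cases $B=\emptyset$ or $A\cap B=\emptyset$, where the relevant series reduce to $0$ or to a single summand — are routine and do not affect the argument.
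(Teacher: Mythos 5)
Your proposal is correct and follows essentially the same route as the paper: cite \thref{thm:raynermain}~\eqref{thm:raynermain:1} for the backward direction, get \ref{nonempty} from $0\in k\pow{\cF}$, and use the existence of a third field element to build series with prescribed supports whose sum or difference has support $B$ (resp.\ $A\cup B$). The only cosmetic difference is that you fix the coefficients uniformly ($1$ and $c$) where the paper chooses them pointwise, which changes nothing.
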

	\begin{proof}
			\thref{thm:raynermain}~\eqref{thm:raynermain:1} gives us the backward direction. Conversely, suppose that $ k\pow{\cF} $ is an additive subgroup of $\K$. Then it contains $0$, whence $ \emptyset=\supp(0)\in\cF$. This establishes \ref{nonempty}. Now let $ A\in \cF $ and let $ B\subseteq A $. Let $ a \in \bbK $ be such that $ \supp(a) = A $ and let $ c\in \K $ be defined by
		\[
		\begin{cases}
		c_g\notin \{0,-a_g\}, &g\in B;\quad \text{(which is possible as $ k\neq\bbF_2 $)}\\
		c_g= -a_g, &g\in A\setminus B;\\
		c_g = 0, & g\in G\setminus A.
		\end{cases}
		\]
		Then $ \supp( c) = A $, whence $ c\in k\pow{\cF} $. Since $ k\pow{\cF} $ is a group, we have $ a+c \in k\pow{\cF} $ and thus $B =  \supp(a+c) \in \mathcal{F}$, yielding \ref{subsets}. Now let $ A,B\in\cF $ and let $ a\in \K $ be such that $ \supp(a) = A $. Then choose $ b\in\K $ such that $ \supp(b)= B $ and $ b_g\neq -a_g $ for every $ g\in A\cap B $ (this is always possible since $ k\neq\bbF_2 $). This yields $ \supp(a+b)=A\cup B\in\cF $ and thus establishes \ref{union}.
	\end{proof}

	Since \ref{subsets} implies \ref{initial}, we obtain the following.
		
	\begin{cor}\thlabel{cor:group-restr-cl}
		{ Suppose that $k\neq \bbF_2$.} If $ k\pow{\cF} $ is an additive group, then it is restriction closed (and thus, in particular, truncation closed).\qed
	\end{cor}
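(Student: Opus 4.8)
The plan is to read this off directly from the preceding proposition, since essentially all of the work has already been done in \thref{prop:group}. First I would assume the hypothesis of the corollary, namely that $k\neq\bbF_2$ and that $k\pow{\cF}$ is an additive subgroup of $(\K,+)$. Then the forward direction of \thref{prop:group} applies verbatim and yields that $\cF$ satisfies \ref{subsets} (together with \ref{union} and \ref{nonempty}, though only \ref{subsets} is relevant here). By \thref{def:trunc-rest-closed}, the statement that $\cF$ satisfies \ref{subsets} is by definition the assertion that $k\pow{\cF}$ is restriction closed, so the first claim is immediate.

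For the parenthetical claim, I would invoke the implication \ref{subsets} $\Rightarrow$ \ref{initial} recorded in the remark following \thref{conditions}. Concretely, if $A\in\cF$ and $B$ is an initial segment of $A$, then in particular $B\subseteq A$, so \ref{subsets} forces $B\in\cF$; this is exactly \ref{initial}. Appealing once more to \thref{def:trunc-rest-closed}, satisfaction of \ref{initial} means precisely that $k\pow{\cF}$ is truncation closed, which establishes the second claim.

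Since every step is a direct application of \thref{prop:group} and of the already-noted implication among the \thref{conditions}, I do not expect any genuine obstacle: the entire mathematical content is carried by \thref{prop:group}, and the hypothesis $k\neq\bbF_2$ is required here only because it is required there.
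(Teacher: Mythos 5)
Your argument is correct and is exactly the paper's: the corollary is read off from \thref{prop:group} (which yields \ref{subsets}, i.e.\ restriction closure) together with the noted implication \ref{subsets} $\Rightarrow$ \ref{initial} for truncation closure. No further comment is needed.
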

	
	We now show that the conclusion of \thref{prop:group} fails for $k=\bbF_2$. 
	{ Note that for any family $\mathcal{F}$, there is a bijective correspondence between   $\mathcal{F}$ and $\bbF_2(\!(\mathcal{F})\!)$ given by $A \mapsto \sum_{g\in A}t^g$.
		\begin{es}\thlabel{ex:counterexf2}
			Let $\bbF_2(s)$ be the subfield of $\bbF_2(\!(\Z)\!)$ generated by $s= t^2+t^3$. We show that $\bbF_2(s)$ does not contain $t^2$ and is thus not truncation closed. It suffices to prove that for any $p,q\in \bbF_2[X]$ with $q\neq 0$ we have $t^2q(s)\neq p(s)$. We do so by induction on the degree of $p$.
			
			Clearly, if $\deg(p)=0$, then for any $q \in \bbF_2[X]$ we have $t^2q(s)\neq p(s)$. Let $n\in \N$ and suppose that the claim holds for any polynomial of degree $n-1$. Let $p(X)=X^n+\sum_{i=0}^{n-1}a_iX^i$ and assume, for a contradiction, that for some $q\in \bbF_2[X]\setminus\setbr{0}$ we have $t^2q(s)= p(s)$. Let $q(X)=\sum_{j=0}^mb_jX^j$. Then
			$$\sum_{j=0}^mb_jt^2(t^2+t^3)^j=t^2q(s)=p(s)=(t^2+t^3)^n+\sum_{i=0}^{n-1}a_i(t^2+t^3)^i.$$
			Comparing coefficients of $t^0$, we obtain $a_0=0$ and thus $$\sum_{j=0}^mb_jt^2(t^2+t^3)^j=(t^2+t^3)\brackets{(t^2+t^3)^{n-1}+\sum_{i=1}^{n-1}a_i(t^2+t^3)^{i-1}}\!.$$
			Comparing coefficients of $t^3$, we obtain
			$a_1=0$, whence
			$$\sum_{j=0}^mb_jt^2(t^2+t^3)^j=(t^2+t^3)^2\brackets{(t^2+t^3)^{n-2}+\sum_{i=2}^{n-1}a_i(t^2+t^3)^{i-2}}\!.$$
			Finally, comparing coefficients of $t^2$, we obtain
			$b_0=0$.
			Hence,
			$$t^2\sum_{j=0}^{m-1}b_{j+1}s^j=s\brackets{s^{n-2}+\sum_{i=2}^{n-1}a_is^{i-2}}\!.$$
			This shows that for the polynomial $p'(X)=X\brackets{X^{n-2}+\sum_{i=2}^{n-1}a_iX^{i-2}}$ of degree $n-1$ there exists $q'(X)$ such that $t^2q'(s)=p'(s)$ giving us the required contradiction.
			Now let $\mathcal{F}$ be the set of all supports of elements of $\bbF_2(s)$. Then $\bbF_2(s)=\bbF_2\pow{\mathcal{F}}$ and $\bbF_2\pow{\mathcal{F}}$ is a subfield of $\bbF_2\pow{\Z}$ which is not truncation closed.
		\end{es}
		
		We now also consider multiplication on $k\pow{\cF}$. 
		
		\begin{lemma}\thlabel{lem:ring}
			If $ \cF $ satisfies \ref{subsets}, \ref{union}, \ref{nonempty} and \ref{sums}, then $k\pow{\cF}$ is a subring (possibly without identity) of $ \K $.
		\end{lemma}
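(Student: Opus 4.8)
The plan is to separate the statement into its additive and multiplicative content, discharge the additive part by invoking an already-proved result, and then verify multiplicative closure directly from the support bound for products.

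First I would observe that conditions \ref{subsets}, \ref{union} and \ref{nonempty} are exactly the hypotheses of \thref{thm:raynermain}~\eqref{thm:raynermain:1}. Applying that result immediately yields that $k\pow{\cF}$ is a subgroup of $(\K,+)$; in particular it is nonempty, contains $0$, and is closed under addition and additive inverses. No further argument is needed for the additive structure, and crucially this does not require $k\neq\bbF_2$, so the lemma covers all coefficient fields.

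It then remains to establish closure under multiplication. Given $a,b\in k\pow{\cF}$, by \thref{def:khull} we have $\supp(a),\supp(b)\in\cF$. Using the bound $\supp(ab)\subseteq\supp(a)\oplus\supp(b)$ recorded in the preliminaries, together with \ref{sums} (which gives $\supp(a)\oplus\supp(b)\in\cF$) and \ref{subsets} (which passes to the subset), I conclude $\supp(ab)\in\cF$, that is, $ab\in k\pow{\cF}$. Combined with the additive subgroup structure, this shows that $k\pow{\cF}$ is closed under the ring operations inherited from $\K$ and is therefore a subring.

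I do not anticipate a genuine obstacle here: the well-orderedness that makes $ab$ a legitimate element of $\K$ is already built into the definition of $\K$ and into the hypothesis that every member of $\cF$ is well-ordered, while \ref{sums} and \ref{subsets} handle all the bookkeeping on supports. The only point deserving care is the qualifier ``possibly without identity'': the four hypotheses do not force $\{0\}\in\cF$, equivalently $1=t^0\in k\pow{\cF}$, as is already witnessed by $k\pow{G^{>0}}$ for $G\neq\{0\}$. Hence one should resist trying to produce a multiplicative unit, and the statement is correctly phrased without one.
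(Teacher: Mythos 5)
Your proposal is correct and follows essentially the same route as the paper: additive closure from the earlier subgroup result, multiplicative closure from $\supp(ab)\subseteq\supp(a)\oplus\supp(b)$ together with \ref{sums} and \ref{subsets}. If anything, your version is slightly cleaner, since you cite \thref{thm:raynermain}~\eqref{thm:raynermain:1} directly (so no $k\neq\bbF_2$ hypothesis is even nominally involved) and you make explicit the use of \ref{subsets} in passing from $\supp(a)\oplus\supp(b)$ to its subset $\supp(ab)$, which the paper leaves implicit.
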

		
		\begin{proof}
			By \thref{prop:group}, $ k\pow{\cF} $ is an additive subgroup of $ (\K,+) $.
			Now let $ a,b\in k\pow{\cF} $. We set $ A=\supp(a)$, $ B=\supp(b)$ and let $ c=ab\in\K $. Then by definition of the product, we have $ \supp(ab)\subseteq A\oplus B \in \cF $. Hence, by \ref{sums} we obtain $ \supp(ab)\in\cF $ and thus $ ab\in k\pow{\cF} $.
		\end{proof}
	}

	The converse of \thref{lem:ring} does not hold unconditionally, as \thref{ex:counterexf2} presents a $k$-hull that is a subfield of $\bbF_2\pow{\bbZ}$ not satisfying \ref{subsets}. However, under certain conditions on the characteristic or the cardinality of $k$, also the converse of \thref{lem:ring} holds, as \thref{prop:ring2} will show.
	
	\begin{lemma}\thlabel{lem:multsupp}
			Let $a,b\in \bbK$ and suppose that at least one of the following conditions holds:
		\begin{enumerate}[(i)]
			\item\label{lem:multsupp:1}
			There exists a totally ordered subfield $ Q\subseteq k $ such that $a_g,b_g\in Q^{\geq 0}$ for any $g\in G$.
			\item The set $\{a_g\mid g\in \supp(a)\}\cup \{b_g\mid g\in \supp(b)\}$ is algebraically independent over the prime field of $k$.
		\end{enumerate}
			Then $\supp(ab)=\supp(a)\oplus\supp(b)$.
			In particular, condition \eqref{lem:multsupp:1} holds if $ \Char(k) = 0 $ and $a_g,b_g\in \Q^{\geq 0}$ for any $g\in G$.
	\end{lemma}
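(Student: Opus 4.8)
The inclusion $\supp(ab)\subseteq\supp(a)\oplus\supp(b)$ holds for all $a,b\in\K$, as recorded in the preliminaries, so the entire content is the reverse inclusion: every $g\in\supp(a)\oplus\supp(b)$ should actually occur in $\supp(ab)$. The plan is to fix such a $g$ and set $H=\{h\in\supp(a)\mid g-h\in\supp(b)\}$. Since supports are well ordered, only finitely many pairs from $\supp(a)\times\supp(b)$ sum to $g$ (this finiteness is precisely what makes multiplication in $\K$ well defined), so $H$ is finite; and $H\neq\emptyset$ exactly because $g$ lies in the Minkowski sum. The coefficient of $t^g$ in $ab$ is then the finite sum $c_g=\sum_{h\in H}a_hb_{g-h}$, in which each factor $a_h$ and $b_{g-h}$ is nonzero. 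It therefore suffices to show $c_g\neq 0$ under either hypothesis.

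Under \eqref{lem:multsupp:1} this is immediate: each summand $a_hb_{g-h}$ is a product of two elements of $Q^{\geq 0}$ that are nonzero, hence lies in $Q^{>0}$, and a nonempty finite sum of strictly positive elements of the totally ordered field $Q$ is again strictly positive. Thus $c_g>0$, and in particular $c_g\neq 0$. The final clause follows at once: when $\Char(k)=0$ the field $\Q$ is a totally ordered subfield of $k$, so the assumption $a_g,b_g\in\Q^{\geq 0}$ is exactly condition \eqref{lem:multsupp:1} with $Q=\Q$.

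Under the algebraic independence hypothesis I would regard the finitely many coefficients occurring in $c_g$ as distinct indeterminates over the prime field of $k$. Each summand $a_hb_{g-h}$ is then a degree-two monomial, and the map $h\mapsto a_hb_{g-h}$ is injective: a monomial determines the unordered pair of indeterminates appearing in it, and since the $a$-indeterminates are pairwise distinct and disjoint from the $b$-indeterminates, this pair recovers $h$. Hence $c_g$ is a sum of $\lvert H\rvert\geq 1$ pairwise distinct monomials, each with coefficient $1$, so it is a nonzero polynomial over the prime field; evaluating it at algebraically independent elements gives $c_g\neq 0$.

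The delicate point, and the only place where algebraic independence does essential rather than cosmetic work, is the injectivity in the previous paragraph. If two distinct decompositions $g=h+(g-h)$ produced the same monomial, their contributions could cancel, and they genuinely do cancel in positive characteristic: already $(\alpha+\alpha t)(\beta+\beta t)=\alpha\beta(1+t^2)$ in characteristic $2$ loses the coefficient of $t$. The hypothesis must accordingly be used as supplying pairwise distinct coefficients (an algebraically independent set consists of distinct elements), and I would make sure to invoke exactly this to guarantee that distinct summands of $c_g$ contribute distinct monomials, so that no cancellation can occur.
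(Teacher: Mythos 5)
Your proof is correct and follows essentially the same route as the paper's: isolate the coefficient of a fixed element of $\supp(a)\oplus\supp(b)$ as a finite sum over decompositions, then rule out cancellation either by positivity in $Q$ or by algebraic independence. The only difference is one of detail: you make explicit why the resulting polynomial over the prime field is nonzero (distinct decompositions yield distinct degree-two monomials, provided the hypothesis is read as supplying pairwise distinct coefficients), a point the paper's proof asserts without elaboration.
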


	\begin{proof}
		Since $\supp(cd)\subseteq \supp(c)\oplus\supp(d)$ holds for any $c,d\in \bbK$, we only have to verify $A\oplus B\subseteq \supp(ab)$ for $A=\supp(a)$ and $B=\supp(b)$.
		Let $h\in A\oplus B$ with $h=h_1+h_2$ for some $h_1\in A$ and $h_2\in B$. Then
		\begin{align}(ab)_h=\sum_{\substack{(g_1,g_2)\in (A\times B)\\g_1+g_2=h}}a_{g_1}b_{g_2}=a_{h_1}b_{h_2}+\sum_{\substack{(g_1,g_2)\in (A\times B)\setminus\{(h_1,h_2)\}\\g_1+g_2=h}}a_{g_1}b_{g_2}.\label{eq:productsupp}\end{align}
		\begin{enumerate}[(i)]
			\item
			Suppose that for any $g\in G$ we have $a_g,b_g\in Q^{\geq 0}$. Then $a_{h_1},b_{h_2}>0$ and thus \eqref{eq:productsupp} shows that $(ab)_h>0$.
			\item Suppose that $\{a_g\mid g\in \supp(a)\}\cup \{b_g\mid g\in \supp(b)\}$ is algebraically independent over the prime field $P$ of $k$. By \eqref{eq:productsupp}, $(ab)_h= p(a_{r_1},\ldots,a_{r_i},b_{s_1},\ldots,b_{s_j})$ for some $i,j\in \N^{>0}$, some non-zero polynomial  $p\in P[X_1,\ldots,\allowbreak X_i,Y_1,\ldots,Y_j]$ and some ${r_1},\ldots,{r_i}\in A,{s_1},\ldots,{s_j}\in B$. By algebraic independence, we obtain $(ab)_h\neq 0$.
		\end{enumerate}
		In both cases, we obtain $(ab)_h\neq 0$. Hence, $h\in \supp(ab)$, as required.
	\end{proof}
	
	\begin{prop}\thlabel{prop:ring}\thlabel{prop:ring2}
		Suppose that at least one of the following conditions is satisfied:
		\begin{enumerate}[(i)]
			\item\label{char_0} 	$\mathrm{char}(k) = 0$;
			\item\label{card_k} 	$ |k|\geq \sup\setbr{|A| \mid A\in \cW(G)} $.

		\end{enumerate}
		\noindent
		Then $ k\pow{\cF} $ is a subring (possibly without identity) of $ \K $ if and only if $ \cF $ satisfies conditions \ref{subsets}, \ref{union}, \ref{nonempty} and \ref{sums}.
	\end{prop}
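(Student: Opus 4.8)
The plan is to prove the two implications separately, with the backward direction being immediate from the previous lemma and the forward direction carrying all the content.

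For the backward direction, suppose $\cF$ satisfies \ref{subsets}, \ref{union}, \ref{nonempty} and \ref{sums}. Then \thref{lem:ring} applies verbatim and yields that $k\pow{\cF}$ is a subring of $\K$; note that neither hypothesis \eqref{char_0} nor \eqref{card_k} is needed here. The whole point of the extra assumptions on $k$ is to force the \emph{converse} of \thref{lem:ring}.

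For the forward direction, assume $k\pow{\cF}$ is a subring of $\K$. First I would dispose of the degenerate case $G=\{0\}$: then every well-ordered subset lies in $\{\emptyset,\{0\}\}$, so $\cF\subseteq\{\emptyset,\{0\}\}$, and the four conditions can be verified by hand against the (also easily listed) possibilities for $k\pow{\cF}$. So assume $G\neq\{0\}$. Being a nontrivial ordered, hence torsion-free, abelian group, $G$ contains a copy of $\N$ via $\{0,g,2g,\ldots\}$ for some $g>0$, so $\sup\setbr{|A|\mid A\in\cW(G)}\geq\aleph_0$. Consequently each hypothesis forces $k\neq\bbF_2$ (under \eqref{char_0} because fields of characteristic $0$ are infinite; under \eqref{card_k} because then $|k|\geq\aleph_0$). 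Since a subring is in particular an additive subgroup, \thref{prop:group} now delivers \ref{subsets}, \ref{union} and \ref{nonempty} at once.

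It remains to establish \ref{sums}. Fix $A,B\in\cF$. The idea is to realise $A\oplus B$ as the support of a product of two elements of $k\pow{\cF}$ whose coefficients are arranged so that \thref{lem:multsupp} applies, thereby ruling out all cancellation. Concretely, I would choose $a,b\in\K$ with $\supp(a)=A$ and $\supp(b)=B$ and prescribe their coefficients as follows: under \eqref{char_0}, set every nonzero coefficient equal to $1\in\Q^{\geq0}\subseteq k$, so that \thref{lem:multsupp}~\eqref{lem:multsupp:1} applies with $Q=\Q$; under \eqref{card_k}, choose the coefficients $\setbr{a_g\mid g\in A}\cup\setbr{b_h\mid h\in B}$ to be algebraically independent over the prime field of $k$, so that the second case of \thref{lem:multsupp} applies. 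In either case $\supp(ab)=A\oplus B$. Since $a,b\in k\pow{\cF}$ and $k\pow{\cF}$ is a ring, $ab\in k\pow{\cF}$, whence $A\oplus B=\supp(ab)\in\cF$, which is exactly \ref{sums}.

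The main obstacle is the availability of the required coefficients in case \eqref{card_k}: one must check that $k$ contains an algebraically independent set over its prime field of cardinality $|A|+|B|$, i.e.\ that $\td(k)\geq|A|+|B|$ (transcendence degree over the prime field). This is precisely where the bound $|k|\geq\sup\setbr{|A|\mid A\in\cW(G)}$ is used: since $A,B\in\cW(G)$ we have $|A|+|B|\leq\sup\setbr{|A|\mid A\in\cW(G)}\leq|k|$, and for uncountable $k$ one has $\td(k)=|k|$, so the needed independent coefficients exist. I would treat this counting step explicitly, as it is the only non-formal point of the argument and the sole place where hypothesis \eqref{card_k} (rather than the softer ordered-subfield route of \eqref{char_0}) is genuinely invoked.
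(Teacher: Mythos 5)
Your overall architecture matches the paper's: backward direction from \thref{lem:ring} without using \eqref{char_0} or \eqref{card_k}; forward direction getting \ref{subsets}, \ref{union}, \ref{nonempty} from \thref{prop:group} (after noting $k\neq\bbF_2$), and then realising $A\oplus B$ as $\supp(ab)$ via \thref{lem:multsupp}. The characteristic-zero case is handled exactly as in the paper. However, there is a genuine gap in your treatment of hypothesis \eqref{card_k} in positive characteristic. You reduce the existence of the required coefficients to $\td(k)\geq |A|+|B|$ and justify this only ``for uncountable $k$'', where indeed $\td(k)=|k|$. But \eqref{card_k} does not force $k$ to be uncountable: take $G=\Z$, so that $\sup\setbr{|A|\mid A\in\cW(G)}=\aleph_0$, and $k=\ol{\bbF_p}$, which is countably infinite and hence satisfies \eqref{card_k}. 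Here $\td(k/\bbF_p)=0$, so for infinite $A,B\in\cF$ there is no algebraically independent set of coefficients of the required size, and the second case of \thref{lem:multsupp} is simply unavailable. Your argument therefore does not cover the case $\Char(k)=p>0$ with $|k|=\aleph_0$.

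The paper closes exactly this case by a separate argument (its ``Case 2''): since a countably infinite field of characteristic $p$ has degree $\aleph_0$ over $\bbF_p$, one can choose coefficients $\setbr{a_g\mid g\in A}$ that are merely \emph{linearly} independent over $\bbF_p$, set $a=\sum_{g\in A}a_gt^g$ and $b=\sum_{g\in B}t^g$, and compute directly that for $h=h_1+h_2$ with $h_1\in A$, $h_2\in B$ one has $(ab)_h=\sum_{g\in A,\;h-g\in B}a_g$, a nonempty sum of pairwise distinct members of a linearly independent set (each $g_1$ determines $g_2=h-g_1$, so no coefficient repeats), hence nonzero. Incorporating this linear-independence case, in place of the algebraic-independence argument when $k$ is countable of positive characteristic, would complete your proof.
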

	\begin{proof}
		{ 
			The backward direction follows from \thref{lem:ring}, independently of conditions \eqref{char_0} or \eqref{card_k}. 
			For the converse, suppose that $k\pow{\cF}$ is a subring of $\K$.
			
			First assume that $ \Char(k) = 0 $. By \thref{prop:group}, it remains to verify \ref{sums}. Let $A,B \in \cF$ and set $a=\sum_{g\in A}t^g$ and $b=\sum_{g\in B}t^g$. Then by \thref{lem:multsupp}, we obtain that $A\oplus B=\supp(ab)\in \cF$.
			
			Now assume that \eqref{card_k} holds and that $\Char(k)=p>0$. Note that $k$ is infinite, since any ordered abelian group contains a well-ordered subset of cardinality $\aleph_0$.
			Hence, again by \thref{prop:group}, it remains to verify \ref{sums}.
			We distinguish two cases.
			
			\textbf{Case 1:} Suppose that $|k|>\aleph_0$. 
			Let $A,B \in \cF$. Then $|k|\geq |A|+|B|$. Since $|k|$ is uncountable, its absolute transcendence degree over its prime field $P$ is equal to $|k|$. Hence, there exists a set $C=\{a_g\mid g\in A\}\cup \{b_h\mid h\in B\}\subseteq k$ (where the $a_g$ and $b_h$ are pairwise distinct) of cardinality at most $|k|$ such that $C$ is algebraically independent over $P$. We set $a=\sum_{g\in A}a_gt^{g}$ and $b=\sum_{h\in B}b_ht^{h}$.
			Then by \thref{lem:multsupp}, we obtain that $A\oplus B=\supp(ab)\in \cF$.
		
			\textbf{Case 2:} Suppose that $|k|=\aleph_0$. Let $A,B\in \cF$. As $|A|\leq |k|=\aleph_0$, the set $A$ is countable. Since the prime field of $k$ is the finite field $\bbF_p$, the degree of $k$ over $\bbF_p$ is $\aleph_0$. We can thus choose a set $ \setbr{a_g \mid g\in A}\subseteq k $ linearly independent over $ \bbF_p $ and set $a=\sum_{g\in A}a_gt^g$.  Moreover, we set $b=\sum_{g\in B}t^g$. Let $h_1\in A$, $h_2\in B$ and set $h:=h_1+h_2$. We compute
			$$(ab)_{h}=a_{h_1}+\sum_{\substack{(g_1,g_2)\in (A\times B)\setminus\{(h_1,h_2)\}\\g_1+g_2=h}}a_{g_1}.$$
			Since this is a non-trivial linear combination of elements from $ \setbr{a_g \mid g\in A} $, we obtain $(ab)_h\neq 0$. This shows $h\in \supp(ab)$, establishing $A\oplus B = \supp(ab)\in \cF$, as required.
		}
	\end{proof}

	\begin{rmk}\thlabel{rmk:conditionii}
		\begin{enumerate}[(i)]
			\item Conditions \eqref{char_0} and \eqref{card_k} in \thref{prop:ring} ensure that in its proof the sums of the coefficients of the power series representing $a$ and $b$ do not cancel in the product $ab$, whence $\supp(ab)=\supp(a)\oplus\supp(b)$. 
			
			\item\label{rmk:conditionii:2} Condition \eqref{card_k} is always satisfied if $|k|\geq |G|$, as $|G|\geq |A|$ for any $A \in \cW(G)$.
		\end{enumerate}
		
	\end{rmk}
	
	We now consider the field structure on $k$-hulls.
	
	\begin{lemma}\thlabel{prop:field}
		\begin{enumerate}[(i)]
			\item \label{prop:field:1}
			Suppose that $\cF$  satisfies conditions  \ref{subsets}, \ref{union}, \ref{zero}, \ref{sums}, \ref{semigroup} and \ref{negatives}. Then $k\pow{\cF}$ is a subfield of $\K$.
			\item\label{prop:field:2}
			If $\cF$ satisfies conditions \ref{singletons}, \ref{subsets}, \ref{union}, \ref{sums}, \ref{semigroup} then $ K $ is a Hahn field. 
		\end{enumerate}
	\end{lemma}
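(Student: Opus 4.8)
The plan is to treat the two parts in turn, deriving part \eqref{prop:field:2} from part \eqref{prop:field:1}. For part \eqref{prop:field:1}, note first that \ref{zero} implies \ref{nonempty}, so \thref{lem:ring} applies to the hypotheses \ref{subsets}, \ref{union}, \ref{zero}, \ref{sums} and already guarantees that $k\pow{\cF}$ is a subring of $\K$; moreover $\supp(t^0)=\{0\}\in\cF$ by \ref{zero}, so $k\pow{\cF}$ contains the multiplicative identity. Since $\K$ is a field, it then remains only to show that for every $a\in k\pow{\cF}\setminus\{0\}$ the inverse $a^{-1}$ computed in $\K$ again lies in $k\pow{\cF}$, that is, $\supp(a^{-1})\in\cF$.

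To control $\supp(a^{-1})$ I would factor out the leading monomial. Set $A=\supp(a)$, let $g_0=\min A$ (which exists since $A$ is well-ordered and non-empty), and let $c=a_{g_0}\neq 0$. Then $u:=c^{-1}t^{-g_0}a$ has support $A\oplus\{-g_0\}$ with $\min\supp(u)=0$ and $u_0=1$, so $u=1+\epsilon$ with $\supp(\epsilon)\subseteq G^{>0}$. The usual geometric-series expansion (Neumann's lemma) gives $u^{-1}=\sum_{n\in\N}(-\epsilon)^n$, and since $a^{-1}=c^{-1}t^{-g_0}u^{-1}$ we obtain
\[
\supp(a^{-1})=\{-g_0\}\oplus\supp(u^{-1}),\qquad \supp(u^{-1})\subseteq\bigoplus_{n\in\N}\supp(\epsilon).
\]

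The crux is to place each of these sets in $\cF$ using only the listed conditions, and here \ref{negatives} plays the decisive role. Since $\{g_0\}\subseteq A$, condition \ref{subsets} gives $\{g_0\}\in\cF$, and then \ref{negatives} gives $\{-g_0\}\in\cF$; this is exactly what lets me realise the translate $A-g_0$ as the sum set $A\oplus\{-g_0\}\in\cF$ (via \ref{sums}) without assuming the translation axiom \ref{translations}. Consequently $\supp(\epsilon)\subseteq A\oplus\{-g_0\}$ lies in $\cF$ by \ref{subsets}, and as $\supp(\epsilon)\subseteq G^{\geq 0}$, condition \ref{semigroup} yields $\bigoplus_{n\in\N}\supp(\epsilon)\in\cF$; a further application of \ref{subsets} gives $\supp(u^{-1})\in\cF$. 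Finally \ref{sums} applied to $\{-g_0\}$ and $\supp(u^{-1})$ gives $\supp(a^{-1})\in\cF$, so $a^{-1}\in k\pow{\cF}$ and $k\pow{\cF}$ is a subfield. I expect the support bookkeeping for the inverse to be the main obstacle: the point is to assemble both the monomial translation and the infinite geometric sum out of \ref{negatives}, \ref{sums} and \ref{semigroup} rather than from a translation axiom.

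For part \eqref{prop:field:2} I would reduce to part \eqref{prop:field:1}. Condition \ref{singletons} gives $\{0\}\in\cF$, hence \ref{zero}, and it gives $\{-g\}\in\cF$ for every $g\in G$, hence \ref{negatives}; thus the hypotheses \ref{singletons}, \ref{subsets}, \ref{union}, \ref{sums}, \ref{semigroup} entail all assumptions of part \eqref{prop:field:1}, so $k\pow{\cF}$ is a subfield of $\K$. It then remains to verify the monomial condition of \thref{def:hahn}: for $\alpha\in k$ and $g\in G$ we have $\supp(\alpha t^g)\in\{\emptyset,\{g\}\}$, and both sets lie in $\cF$ by \ref{singletons} together with \ref{subsets} (the empty set being a subset of any singleton). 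Hence $\{\alpha t^g\mid \alpha\in k,\ g\in G\}\subseteq k\pow{\cF}$, so $k\pow{\cF}$ is a Hahn field.
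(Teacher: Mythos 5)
Your proof is correct and is essentially the paper's argument: both factor out the leading monomial, invert $1+\epsilon$ via Neumann's lemma, and place $\bigoplus_{n\in\N}\supp(\epsilon)$ in $\cF$ using \ref{semigroup} and \ref{subsets}, with \ref{negatives} and \ref{sums} (resp.\ the ring closure, which rests on \ref{sums}) handling the monomial shift. The only cosmetic difference is that the paper inverts $b_h^{-1}t^{-h}b$ and recovers $b^{-1}$ by multiplying back inside the ring, whereas you track $\supp(a^{-1})=\supp(u^{-1})\oplus\{-g_0\}$ explicitly; part (ii) is handled identically.
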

	\begin{proof}
	\begin{enumerate}[(i)]
		\item 	\thref{lem:ring} and \thref{rmk:unitary-ring} imply that $ k\pow{\cF} $ is a ring with identity.
		Let $b\in k\pow{\cF}\setminus\setbr{0}$ be arbitrary and let $h=\min\supp(b)$.
		Then by \ref{subsets} and \ref{negatives}, we have $t^{-h}\in k\pow{\cF}$ and thus obtain $$ b_h^{-1}t^{-h} = 1 + \sum_{g \in G^{ >h}} b_h^{-1} b_g t^{g-h} \in k(\!(\mathcal{F})\!).$$ Now set $a = - \sum_{g \in G^{ >h}} b_h^{-1} b_g t^{g-h}$ and let $A=\supp(a)$; so  $ A\subseteq G^{>0}$ and  $A \in \cF $. Then 
		$(1-a)^{-1}=\sum_{i=0}^\infty a^i$ (cf.\ Neumann's Lemma~\cite[page~211]{neumann}).
		Hence, $ \supp(1-a)\inv \subseteq \bigoplus_{n\in \N}A$ and, by \ref{semigroup} and \ref{subsets}, it lies in $ \cF $. 
		This implies $b_ht^hb^{-1}=\brackets{b_h^{-1}t^{-h}b}^{-1}\in k\pow{\cF}$, whence $b^{-1}\in k\pow{\cF}$, as required.
		\item 
		Condition \ref{singletons} implies \ref{zero}, so part (i) gives $ k\pow{\cF} \subseteq \K $. Moreover, \ref{singletons} implies $ k(t^g\mid g\in G)\subseteq k\pow{\cF} $. So $ k\pow{\cF} $ is a Hahn field.
	\end{enumerate}
	\end{proof}

\begin{rmk}\label{rmk3.12}
	Let $g_0 \in G$ and fix $b = \sum_{g \geq g_0} b_g t^g \in \mathbb{K}^\times$ with $b_{g_0} \neq 0$. Then we have
	\[
	b\inv = b_{g_0}\inv t^{-g_0}\brackets{1-\sum_{g>g_0}(-b_{g_0})^{-1}b_gt^{g-g_0}}\inv
	\]
	Let $ \epsilon= \sum_{g>g_0}(-b_{g_0})^{-1}b_gt^{g-g_0} $. 
	Then $ \supp(\epsilon) = \brackets{\supp(b)-g_0}\setminus\setbr{0} $.
	Since $ \supp\!\brackets{(1-\epsilon)\inv}\subseteq \bigoplus_{n\in \N}\supp(\epsilon) $ (see the proof of \thref{prop:field}), it follows that 
	\begin{align*}
		\supp(b\inv) &= 
	\supp\brackets{b_{g_0}\inv t^{-g_0}(1-\epsilon)\inv} \subseteq \brackets{\bigoplus_{n\in\N}\supp(\epsilon)}-g_0  \\
	&=  \brackets{\bigoplus_{n\in\N}\brackets{\brackets{\supp(b)-g_0}\setminus\setbr{0}}}-g_0.
	\end{align*}
	
\end{rmk}

	\begin{lemma}\thlabel{lemma:neumann}
		Let $a\in k\pow{G^{>0}}$ and suppose that 
			there exists a totally ordered subfield $ Q\subseteq k $ such that $a_g\in Q^{>0}$ for any $g\in \supp(a)$. 
		Then $\supp((1-a)\inv)=\bigoplus_{n\in \N}\supp(a)$.
		In particular, 
		this holds if $ \Char(k) = 0 $ and $a_g\in \Q^{>0}$ for any $g\in \supp(a)$.
	\end{lemma}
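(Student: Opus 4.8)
The statement follows from Neumann's Lemma together with a no-cancellation argument in the same spirit as \thref{lem:multsupp}~\eqref{lem:multsupp:1}. Since $a\in k\pow{G^{>0}}$ we have $\supp(a)\subseteq G^{>0}$, so the family $(a^i)_{i\in\N}$ is summable and, exactly as in the proof of \thref{prop:field}, $(1-a)\inv=\sum_{i=0}^\infty a^i$ with $\supp((1-a)\inv)\subseteq \bigoplus_{n\in\N}\supp(a)$. Thus the only thing left to establish is the reverse inclusion $\bigoplus_{n\in\N}\supp(a)\subseteq \supp((1-a)\inv)$.

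To prove this, I would fix $h\in \bigoplus_{n\in\N}\supp(a)$ and examine the coefficient $((1-a)\inv)_h=\sum_{i=0}^\infty (a^i)_h$. First I would note that, writing $m=\min\supp(a)>0$, every element of $\supp(a^i)$ is $\geq im$, so for the fixed $h$ only the finitely many indices $i$ with $im\leq h$ can contribute; in particular the sum defining $((1-a)\inv)_h$ is a finite sum. For each such $i$, expanding the product yields
\[
(a^i)_h=\sum_{\substack{(g_1,\ldots,g_i)\in\supp(a)^i\\ g_1+\cdots+g_i=h}}a_{g_1}\cdots a_{g_i},
\]
which is a (possibly empty) sum of products of elements of $Q^{>0}$, hence an element of $Q^{\geq 0}$ that is strictly positive precisely when $h$ admits a representation as a sum of $i$ elements of $\supp(a)$.

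The crux is then to rule out cancellation across the different powers, and this is exactly where the totally ordered field $Q$ does the work: all the terms $(a^i)_h$ lie in $Q^{\geq 0}$, so their finite sum $((1-a)\inv)_h$ lies in $Q^{\geq 0}$ as well. Since $h\in\bigoplus_{n\in\N}\supp(a)$, there is at least one $i$ for which $h$ is a sum of $i$ elements of $\supp(a)$ (for $h=0$ this is $i=0$, giving the contribution $1$ from $a^0$), so at least one term is strictly positive while no term is negative. Positivity of $Q$ therefore forces $((1-a)\inv)_h\in Q^{>0}$, in particular $((1-a)\inv)_h\neq 0$, i.e.\ $h\in\supp((1-a)\inv)$. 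This yields the reverse inclusion and hence the desired equality. For the ``in particular'' clause I would simply take $Q=\Q$ with its standard ordering, which is a totally ordered subfield of $k$ whenever $\Char(k)=0$, and apply the statement just proved.

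The main obstacle is precisely this no-cancellation step, which is also the whole point of the positivity hypothesis: without it the contributions $(a^i)_h$ coming from different powers could sum to zero and the reverse inclusion could genuinely fail. The ordered-field assumption guarantees that all contributions share the same sign, so the finite sum cannot vanish as soon as it is nonempty.
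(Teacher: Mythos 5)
Your proof takes essentially the same route as the paper: Neumann's Lemma gives the expansion $(1-a)\inv=\sum_{i=0}^\infty a^i$ and the inclusion $\supp((1-a)\inv)\subseteq\bigoplus_{n\in\N}\supp(a)$, and the reverse inclusion is obtained by noting that every contribution to a fixed coefficient lies in $Q^{\geq 0}$ with at least one term strictly positive, so no cancellation can occur. One small correction: your justification that only finitely many $i$ contribute to $((1-a)\inv)_h$ --- namely that $im\leq h$ holds for only finitely many $i$ --- fails when $G$ is non-archimedean (take $G=\Z\times\Z$ ordered lexicographically, $m=(0,1)$ and $h=(1,0)$; then $im\leq h$ for all $i$). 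The finiteness you need is exactly the summability of the family $(a^i)_{i\in\N}$, i.e.\ that each $h$ lies in $\supp(a^i)$ for only finitely many $i$, which is part of what Neumann's Lemma provides; with that substitution your argument is complete and matches the paper's.
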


	\begin{proof}
		Set $\supp(a)=A$.
		We first show that for any $n\in \N^{>0}$ we have $$\supp(a^n) = nA:=\underbrace{A\oplus\ldots\oplus A}_{n\text{ times}}.$$
		Since for any $c,d\in \bbK$ we have $\supp(cd)\subseteq \supp(c)\oplus\supp(d)$, it is clear that $\supp(a^n) \subseteq n A$. For the converse, let
		$h\in nA$ and fix $h_1,\ldots,h_n\in A$ such that 
		$h=h_1+\ldots+h_n$. 
		Then 
		$$(a^n)_h={\sum_{\substack{(g_1,\ldots,g_n)\in A^n\\g_1+\ldots+g_n=h} }a_{g_1}\ldots a_{g_n}}.$$
		We can re-write this sum to obtain
		\begin{align}(a^n)_h= {a_{h_1}\ldots a_{h_n}}+{\sum_{\substack{(g_1,\ldots,g_n)\in A^n\setminus\{(h_1,\ldots,h_n)\}\\g_1+\ldots+g_n=h} }a_{g_1}\ldots a_{g_n}}.\label{eq:powexpansion}\end{align}

		We now have that in \eqref{eq:powexpansion} both ${a_{h_1}\ldots a_{h_n}}>0$ and $a_{g_1}\ldots a_{g_n}>0$ for all terms of the sum. Hence, $(a^n)_h\neq 0$.
	
		Now by Neumann's Lemma we have
		$(1-a)\inv=\sum_{n=0}^\infty a^n$. 
		We now show that 
		$$\supp((1-a)^{-1})=\bigcup_{n=1}^\infty nA\cup \{0\}.$$
		Note that the right-hand side is equal to $\bigoplus_{n\in \N}A$.
		The inclusion
				$$\supp((1-a)^{-1})\subseteq\bigcup_{n=1}^\infty nA\cup \{0\}$$
		is clear, as $\supp(a^n)=nA$ for any $n\in \N^{>0}$. 
		
		For the converse, first note that $0\in  \supp((1-a)^{-1})$, as $(1-a)\inv = 1+\sum_{n=1}^\infty a^n$.
		Now fix $n\in \N^{>0}$, let $h\in nA$ and fix $h_1,\ldots,h_n\in A$ such that 
		$h=h_1+\ldots+h_n$. 
		Arguing as above, we can write $((1-a)\inv)_h$ as follows:
	\begin{align}((1-a)\inv )_h&= {a_{h_1}\ldots a_{h_n}}+{\sum_{\substack{(g_1,\ldots,g_n)\in A^n\setminus\{(h_1,\ldots,h_n)\}\\g_1+\ldots+g_n=h} }a_{g_1}\ldots a_{g_n}}+\notag\\&+\sum_{j\in \N\setminus\{n\}}\brackets{\sum_{\substack{(g_1,\ldots,g_j)\in A^j\\g_1+\ldots+g_j=h} }a_{g_1}\ldots a_{g_j}}.\label{eq:powexpansion2}\end{align}
	The previous arguments can also be applied to show
	$${\sum_{\substack{(g_1,\ldots,g_n)\in A^n\setminus\{(h_1,\ldots,h_n)\}\\g_1+\ldots+g_n=h} }a_{g_1}\ldots a_{g_n}}+\sum_{j\in \N\setminus\{n\}}\brackets{\sum_{\substack{(g_1,\ldots,g_j)\in A^j\\g_1+\ldots+g_j=h} }a_{g_1}\ldots a_{g_j}}\geq 0$$
	in \eqref{eq:powexpansion2}. Hence, we also obtain $((1-a)\inv )_h\neq 0$, as required.
	\end{proof}

%%%%%%%%%%%%%%%%%%%%%%%%%%%%%%%%%%%%%%%%%%%%%%%%%%%%%%
\begin{es}
	Let $ p\in\N $ be prime and let $ a = t-t^p \in \bbF_p\pow{\Z} $.
	Then $\bigoplus_{n\in \N}\supp(a) = \N$.
	We will show that $ \supp((1-a)\inv)\subsetneq \bigoplus_{n\in \N}\supp(a)$.
	By expressing $ a = t(1-t^{p-1}) $, we obtain
	\[b:=(1-a)\inv=\sum_{i=0}^\infty t^i(1-t^{p-1})^i=\sum_{i=0}^\infty\sum_{j=0}^i {i \choose j} (-1)^jt^{i+(p-1)j}.\]
	A direct calculation gives us 
	$$b_p=(-1)^1\cdot {1 \choose 1}+(-1)^0\cdot {p\choose 0}=0.$$
	Thus, we obtain $p\notin \supp((1-a)\inv)$, whence $\supp((1-a)\inv)\neq \N$.
\end{es}
		
	\begin{prop}\thlabel{prop:field2}\thlabel{cor:hahn}

		Suppose that 
			$\mathrm{char}(k) = 0$. Then the following hold:
		\begin{enumerate}[(i)]
			\item 
			$k\pow{\cF}$ is a subfield of $\K$ if and only if $\cF$ satisfies conditions   \ref{subsets}, \ref{union}, \ref{zero}, \ref{sums}, \ref{semigroup}  and \ref{negatives}.
			\item 
			$k\pow{\cF}$ is a Hahn field in $\K$ if and only if $\cF$ satisfies conditions \ref{singletons}, \ref{subsets}, \ref{union}, \ref{sums}, \ref{semigroup}. Hence, if $ k\pow{\cF} $ is a Hahn field in $\K$, then it is restriction closed and thus also truncation closed.
		\end{enumerate}
	\end{prop}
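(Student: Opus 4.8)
The plan is to reduce both parts to results already proved, with the forward direction of part~(i) carrying all the real content; everything else is formal. For part~(i), I would take the backward direction directly from \thref{prop:field}~\eqref{prop:field:1}. For the forward direction, assume $k\pow{\cF}$ is a subfield of $\K$. Since it is then in particular a subring and $\Char(k)=0$, \thref{prop:ring2} immediately supplies \ref{subsets}, \ref{union}, \ref{nonempty} and \ref{sums}. I would read off \ref{zero} from the fact that the identity $1=t^0$ lies in $k\pow{\cF}$, so that $\{0\}=\supp(1)\in\cF$, and I would read off \ref{negatives} from the observation that $\{g\}\in\cF$ forces $t^g\in k\pow{\cF}$, hence $t^{-g}=(t^g)\inv\in k\pow{\cF}$ and $\{-g\}=\supp(t^{-g})\in\cF$.

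The one substantial step is to establish \ref{semigroup}. Given $A\in\cF$ with $A\subseteq G^{\geq 0}$, I would set $A'=A\setminus\{0\}\subseteq G^{>0}$, which lies in $\cF$ by \ref{subsets}, and form $a=\sum_{g\in A'}t^g\in k\pow{G^{>0}}$ (legitimate since $\Char(k)=0$ gives $\Q\subseteq k$). As $\supp(a)=A'\in\cF$ we have $a\in k\pow{\cF}$, and \thref{lemma:neumann}, applied with $Q=\Q$ and all coefficients equal to $1$, yields $\supp\!\brackets{(1-a)\inv}=\bigoplus_{n\in\N}A'$. Because $k\pow{\cF}$ is a field and $1-a\in k\pow{\cF}$, its inverse $(1-a)\inv$ lies in $k\pow{\cF}$, so $\bigoplus_{n\in\N}A'\in\cF$; since adjoining $0$ as a summand changes nothing, $\bigoplus_{n\in\N}A'=\bigoplus_{n\in\N}A$, which gives \ref{semigroup} and completes part~(i).

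For part~(ii), I would again take the backward direction from \thref{prop:field}~\eqref{prop:field:2}. For the forward direction, a Hahn field contains $t^g$ for every $g\in G$ by \thref{def:hahn}, so $\{g\}=\supp(t^g)\in\cF$ gives \ref{singletons}; being a subfield, part~(i) then furnishes \ref{subsets}, \ref{union}, \ref{sums} and \ref{semigroup}, which together with \ref{singletons} are precisely the asserted conditions. The closing remark is immediate, since a Hahn field satisfies \ref{subsets} and \ref{subsets}~$\Rightarrow$~\ref{initial}, so it is restriction closed, hence truncation closed, in the sense of \thref{def:trunc-rest-closed}.

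The main obstacle throughout is the derivation of \ref{semigroup}: one needs an element of $k\pow{\cF}$ whose inverse has support \emph{exactly} $\bigoplus_{n\in\N}A$, and \thref{lemma:neumann} is what guarantees that no cancellation collapses this support to a proper subset of $\bigoplus_{n\in\N}\supp(a)$ --- a phenomenon that does occur in positive characteristic, as the example immediately following \thref{lemma:neumann} shows. This is exactly why the hypothesis $\Char(k)=0$, entering through the positive rational coefficients, is essential here.
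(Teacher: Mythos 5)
Your proposal is correct and follows essentially the same route as the paper: the backward directions come from \thref{prop:field}, the ring conditions from \thref{prop:ring2} in characteristic $0$, and \ref{semigroup} is extracted by applying \thref{lemma:neumann} to $a=\sum_{g\in A^{>0}}t^g$ and inverting $1-a$ inside the field. The only cosmetic difference is that you strip $0$ from $A$ before invoking the lemma and observe $\bigoplus_{n\in\N}(A\setminus\{0\})=\bigoplus_{n\in\N}A$, whereas the paper applies \ref{subsets} at the end; the content is identical.
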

	
	\begin{proof}
		\begin{enumerate}[(i)]
			\item By \thref{prop:field}, only the forward direction needs to be shown.
		Let $ k\pow{\cF} $ be a field. Then \thref{prop:ring} and \thref{rmk:unitary-ring} imply that $ \cF $ satisfies conditions \ref{zero}, \ref{subsets}, \ref{union} and \ref{sums}. To prove condition \ref{semigroup} let $ A\in\cF $ be such that $ A\subseteq G^{\geq0} $ and let $ a = \sum_{g\in A^{>0}}t^g $. Then $ \supp (1-a) = A\cup\{0\} $. By \thref{lemma:neumann}, the support of $(1-a)\inv$ is ${\bigoplus_{n\in \N}(A\cup\{0\})}$.
		Since $ k\pow{\cF} $ is a field, we have $ (1-a)\inv\in k\pow{\cF} $ and thus $\bigoplus_{n\in \N}(A\cup\{0\})\in \cF$. We obtain by \ref{subsets} that $\bigoplus_{n\in \N}A\in \cF$, establishing \ref{semigroup}. Finally, \ref{negatives} follows easily, as for any monomial $t^g\in k\pow{\cF}$ we already have $t^{-g}\in k\pow{\cF}$.
		\item
		Follows from part (i) and \thref{prop:field}~\eqref{prop:field:2}.
		\end{enumerate}
	\end{proof}
	
	In the last proposition we obtained necessary and sufficient conditions (in the case $\mathrm{char}(k)=0$) in order that $k\pow{\cF}$ is a Hahn field.

	Finally, we show that $k\pow{\cF}$ is a Hahn field if and only if it is a Rayner field. By \thref{cor:hahn}, it suffices to show that $\cF$ is a Rayner field family if and only if it satisfies \ref{singletons}, \ref{subsets}, \ref{union}, \ref{sums}, \ref{semigroup}.
	We first prove that if $G$ is non-trivial, then condition \ref{generated-group} in \thref{def:rayner} can be removed.
	
	\begin{lemma}\thlabel{lemma:equivalences}
		Suppose that $G\neq \{0\}$ and that $ \cF $ satisfies conditions \ref{subsets}, %\ref{union},
		\ref{semigroup} and \ref{translations}. 
		Then conditions \ref{nonempty}, \ref{zero}, \ref{singletons} and \ref{generated-group}
		are equivalent.
	\end{lemma}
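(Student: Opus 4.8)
The plan is to prove the equivalence by exhibiting a single cyclic chain of implications passing through all four conditions, namely \ref{nonempty} $\Rightarrow$ \ref{zero} $\Rightarrow$ \ref{singletons} $\Rightarrow$ \ref{generated-group} $\Rightarrow$ \ref{nonempty}. Of these, the arrow \ref{singletons} $\Rightarrow$ \ref{generated-group} is already recorded among the explicit implications in the opening remark---once every singleton lies in $\cF$, the union $\bigcup_{A\in\cF}A$ is all of $G$---so it may be invoked directly. The remaining three arrows each use only one or two of the standing hypotheses \ref{subsets}, \ref{semigroup} and \ref{translations}, and I would treat them in turn.

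For \ref{nonempty} $\Rightarrow$ \ref{zero} I would start from an arbitrary $A\in\cF$, use \ref{subsets} to deduce $\emptyset\in\cF$ (since $\emptyset\subseteq A$), and then apply \ref{semigroup} to $\emptyset\subseteq G^{\geq 0}$; by the convention $\bigoplus_{n\in\N}\emptyset=\setbr{0}$ this yields $\setbr{0}\in\cF$, establishing \ref{zero}. For \ref{zero} $\Rightarrow$ \ref{singletons}, given any $g\in G$, condition \ref{translations} applied to $\setbr{0}\in\cF$ gives $\setbr{0}+g=\setbr{g}\in\cF$, and since $g$ was arbitrary this is \ref{singletons}. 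Finally, for \ref{generated-group} $\Rightarrow$ \ref{nonempty} I would argue contrapositively: if $\cF$ were empty, then $\bigcup_{A\in\cF}A=\emptyset$ and hence $\left\langle\bigcup_{A\in\cF}A\right\rangle=\setbr{0}$, contradicting \ref{generated-group} in view of $G\neq\setbr{0}$.

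The content of the argument is concentrated in two steps. The first is \ref{nonempty} $\Rightarrow$ \ref{zero}, where the distinguished element $\setbr{0}$ is manufactured from the empty set by combining \ref{subsets} with \ref{semigroup} and the boundary convention $\bigoplus_{n\in\N}\emptyset=\setbr{0}$; this is the one place where an element is produced rather than merely propagated along translations. The second is the closing arrow \ref{generated-group} $\Rightarrow$ \ref{nonempty}, which is exactly where the non-triviality hypothesis $G\neq\setbr{0}$ is needed: over the trivial group the empty family vacuously satisfies \ref{subsets}, \ref{semigroup}, \ref{translations} and even \ref{generated-group}, yet fails \ref{nonempty}, so this hypothesis cannot be dropped (compare \thref{rmk:trivialgroup}). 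Everything else is immediate from the definitions.
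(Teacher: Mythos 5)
Your proof is correct and follows essentially the same route as the paper: the same cyclic chain \ref{nonempty} $\Rightarrow$ \ref{zero} $\Rightarrow$ \ref{singletons} $\Rightarrow$ \ref{generated-group} $\Rightarrow$ \ref{nonempty}, with $G\neq\{0\}$ used exactly where the paper uses it. The only (harmless) difference is in \ref{nonempty} $\Rightarrow$ \ref{zero}, where you pass uniformly through $\emptyset\in\cF$ and the convention $\bigoplus_{n\in\N}\emptyset=\{0\}$ via \ref{subsets} and \ref{semigroup}, whereas the paper splits into cases and uses \ref{translations} when $A\neq\emptyset$; both are valid.
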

	\begin{proof}

		$ \ref{nonempty}\Rightarrow\ref{zero} $: Let $ \cF\neq\emptyset $ and let $ A\in \cF $. If $A\neq \emptyset$, then for any $g\in A$, we obtain by \ref{subsets} and \ref{translations} that $\{0\}=\{g\}-g\in \cF$. Otherwise, by \ref{semigroup} we have $\bigoplus_{n\in\N}A=\{0\}\in \cF$.
		$ \ref{zero}\Rightarrow\ref{singletons} $: This follows immediately from \ref{translations}.
		$ \ref{singletons}\Rightarrow\ref{generated-group} $ and $\ref{generated-group}\Rightarrow\ref{nonempty}$ are obvious. Note that for the latter we need that $G\neq \{0\}$.
	\end{proof}
	
	\begin{lemma}\thlabel{lemma:premain}
		If $ k\pow{\cF} $ is a Rayner field then it is a Hahn field.
	\end{lemma}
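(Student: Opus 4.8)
The plan is to show that a Rayner field family necessarily satisfies \ref{singletons}, from which the Hahn field property follows at once. Recall that by \thref{def:rayner} a Rayner field family satisfies \ref{subsets}, \ref{union}, \ref{nonempty}, \ref{generated-group}, \ref{translations} and \ref{semigroup}, and that by \thref{thm:raynermain}~\eqref{thm:raynermain:3} the $k$-hull $k\pow{\cF}$ is then a subfield of $\K$. To conclude that $k\pow{\cF}$ is a Hahn field (\thref{def:hahn}), it remains to verify the inclusion $\setbr{\alpha t^g \mid \alpha \in k,\ g\in G}\subseteq k\pow{\cF}$. Since $k\pow{\cF}$ is a field it contains $0$, and every nonzero monomial $\alpha t^g$ has support $\setbr{g}$; hence this inclusion is equivalent to the assertion that $\setbr{g}\in\cF$ for all $g\in G$, that is, to condition \ref{singletons}.

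First I would dispose of the degenerate case $G=\setbr{0}$: by \thref{rmk:trivialgroup} the only Rayner field family in the trivial group is $\cF=\setbr{\emptyset,\setbr{0}}$, which visibly satisfies \ref{singletons}. For $G\neq\setbr{0}$ I would invoke \thref{lemma:equivalences}. Indeed a Rayner field family satisfies in particular \ref{subsets}, \ref{semigroup} and \ref{translations}, so that lemma makes the conditions \ref{nonempty}, \ref{zero}, \ref{singletons} and \ref{generated-group} all equivalent. As \ref{generated-group} holds by hypothesis, \ref{singletons} follows, and the argument is complete.

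There is no substantial obstacle here: the entire content has been front-loaded into \thref{lemma:equivalences}, which extracts \ref{singletons} from \ref{generated-group} in the presence of the relevant algebraic conditions. The only point requiring care is that \thref{lemma:equivalences} presupposes $G\neq\setbr{0}$, so the trivial group must be handled separately, and there \thref{rmk:trivialgroup} pins down $\cF$ completely. I would also remark that, unlike \thref{cor:hahn}, this direction needs no hypothesis on $\Char(k)$: the subfield property is supplied directly by \thref{thm:raynermain}, and the monomial inclusion is a purely set-theoretic consequence of \ref{singletons}.
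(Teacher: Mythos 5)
Your proof is correct, but it takes a genuinely different route from the paper's. You obtain the subfield property in one stroke from \thref{thm:raynermain}~\eqref{thm:raynermain:3} (Rayner's theorem, which the paper imports without proof), after which the Hahn field property reduces to the purely set-theoretic condition \ref{singletons}; you then extract \ref{singletons} from \ref{generated-group} via \thref{lemma:equivalences}, handling $G=\{0\}$ separately through \thref{rmk:trivialgroup}. The paper instead routes the argument through its own \thref{prop:field}~\eqref{prop:field:2}, whose hypotheses are \ref{singletons}, \ref{subsets}, \ref{union}, \ref{sums}, \ref{semigroup}; this forces it to verify not only \ref{singletons} (by the same appeal to \thref{lemma:equivalences} and \thref{rmk:trivialgroup}) but also \ref{sums}, which it does by the translation trick $A\oplus B = \bigl((A-m_A)\oplus(B-m_B)\bigr)+(m_A+m_B)$ with $m_A=\min A$, $m_B=\min B$, using \ref{translations}, \ref{union}, \ref{semigroup} and \ref{subsets}. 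Your version is shorter and, as you note, needs no hypothesis on $\Char(k)$ (neither does the paper's); what it does not deliver is the family-level by-product that every Rayner field family satisfies \ref{sums}, which is in the spirit of the paper's programme of relating the conditions of \thref{conditions} to one another, though it is not needed for the statement of the lemma itself.
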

\begin{proof}
Suppose that $ k\pow{\cF} $ is a Rayner field, that is, $ \cF $ satisfies  
\ref{subsets}, 
\ref{union},
\ref{nonempty},
\ref{generated-group}, 
\ref{translations} and
\ref{semigroup}
. By \thref{prop:field}, it remains to verify \ref{singletons} and \ref{sums}.
If $G=\{0\}$, then by \thref{rmk:trivialgroup} we have $\cF=\{\emptyset,\{0\}\}$, which trivially satisfies \ref{singletons} and \ref{sums}.
If $G\neq \{0\}$, then \thref{lemma:equivalences} shows that $\cF$ satisfies \ref{singletons}. We thus only have to show \ref{sums}.
Let $A,B\in \cF$ be non-empty.
Let $m_A=\min A$ and $m_B=\min B$. 
Then by \ref{translations}, we have $A-m_A,B-m_B\in \cF$. 
Note that $A-m_A,B-m_B\in G^{\geq0}$. 
Hence, by \ref{union} and \ref{semigroup}, we obtain 
$$
\bigoplus_{n\in \N}((A-m_A)\cup (B-m_B))\in \cF.
$$
In particular, $(A-m_A)\oplus(B-m_B)\in \cF$.
By \ref{translations} we obtain $A\oplus B = ((A-m_A)\oplus(B-m_B))+(m_A+m_B)\in \cF$.
\end{proof}

\noindent
For $ \Char(k) = 0 $ also the converse of \thref{lemma:premain} holds.

\begin{theorem}\thlabel{thm:main}
	Suppose that $\mathrm{char}(k)=0$. Then $ k\pow{\cF} $ is a Rayner field if and only if it is a Hahn field.
\end{theorem}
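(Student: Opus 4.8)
The plan is to prove the two implications separately, noting at the outset that only the backward direction will use the hypothesis $\Char(k)=0$. The forward direction---that a Rayner field is always a Hahn field---is precisely \thref{lemma:premain} and holds with no assumption on the characteristic, so I would simply invoke it.

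For the converse, I would start from the assumption that $k\pow{\cF}$ is a Hahn field. Since $\Char(k)=0$, \thref{cor:hahn}~(ii) applies and yields that $\cF$ satisfies \ref{singletons}, \ref{subsets}, \ref{union}, \ref{sums} and \ref{semigroup}. The task then reduces to a purely combinatorial check: that these five conditions force the six defining conditions of a Rayner field family (\thref{def:rayner}), namely \ref{subsets}, \ref{union}, \ref{nonempty}, \ref{generated-group}, \ref{translations} and \ref{semigroup}. Three of these---\ref{subsets}, \ref{union} and \ref{semigroup}---are among the hypotheses directly. Condition \ref{nonempty} follows from \ref{singletons} along the chain \ref{singletons}$\Rightarrow$\ref{zero}$\Rightarrow$\ref{nonempty}, and \ref{generated-group} follows from \ref{singletons} as well; both implications are recorded in the remark following the \thref{conditions}.

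The only condition demanding an actual argument is \ref{translations}. Here the key observation is the identity $A+g = A\oplus\{g\}$, valid for every $A\subseteq G$ and $g\in G$. Since \ref{singletons} gives $\{g\}\in\cF$ and $A\in\cF$ holds by assumption, condition \ref{sums} delivers $A\oplus\{g\}=A+g\in\cF$, establishing \ref{translations}. This completes the verification that $\cF$ is a Rayner field family, and hence that $k\pow{\cF}$ is a Rayner field.

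I expect the proof to be short precisely because the substantive work lies upstream: the genuine obstacle is the char-$0$ characterization of Hahn fields in \thref{cor:hahn}, which itself rests on the support computations for products (\thref{lem:multsupp}) and for inverses (\thref{lemma:neumann}). The role of $\Char(k)=0$ is isolated entirely in the single invocation of \thref{cor:hahn}; once the problem has been transferred to the level of the \thref{conditions}, the equivalence is immediate, with the one non-formal step being the passage from \ref{singletons} and \ref{sums} to \ref{translations} via $A+g=A\oplus\{g\}$.
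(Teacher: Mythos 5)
Your proposal is correct and follows essentially the same route as the paper: forward direction by \thref{lemma:premain}, backward direction by reducing to \thref{cor:hahn} and then deriving \ref{translations} from \ref{singletons} and \ref{sums} via $A+g=A\oplus\{g\}$. The only (harmless, slightly cleaner) difference is that you obtain \ref{nonempty} and \ref{generated-group} directly from the explicit implications \ref{singletons}$\Rightarrow$\ref{zero}$\Rightarrow$\ref{nonempty} and \ref{singletons}$\Rightarrow$\ref{generated-group}, whereas the paper routes this through \thref{lemma:equivalences} and therefore needs a separate case for $G=\{0\}$, which your argument avoids.
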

	\begin{proof}
		The forward direction is a special case of \thref{lemma:premain}.
		Suppose now that $ k\pow{\cF}$ is a Hahn field. By \thref{prop:field2}, $ \cF $ satisfies \ref{singletons}, \ref{subsets}, \ref{union}, \ref{sums}, \ref{semigroup}. 
		We need to show that \ref{nonempty}, \ref{generated-group} and \ref{translations} hold. Again, if $G=\{0\}$, then $\cF=\{\emptyset,\{0\}\}$ and there is nothing to prove. 
		Otherwise, by  \thref{lemma:equivalences} it suffices to show \ref{translations}. 
		Let $A\in \cF$ and let $g\in G$. 
		Then by \ref{singletons}, we have $\{g\}\in \cF$. 
		Hence, by \ref{sums}, we obtain $A+g=A\oplus \{g\}\in \cF$, as required.
	\end{proof}
	
	\noindent
The following is a counterexample to the conclusion of \thref{thm:main} for $ \Char(k) =2 $.
	\begin{es}		
		Let $G$ be a countable ordered abelian group. Moreover, let $K=\bbF_2(t^g\mid g\in G)$. Note that since $\bbF_2$ and $G$ are countable, also $K$ is countable. Moreover, $K$ is a Hahn field in $\bbF_2\pow{G}$ with $K=\bbF_2\pow{\cF}$ for $\cF=\{\supp(a)\mid a\in K\}$. We show that $K$ does not satisfy \ref{subsets} and is thus not a Rayner field.
		
		First note that for any $g\in G^{>0}$ we have
		$b:=(1-t^g)^{-1}=\sum_{i=0}^\infty t^{ig}$. Hence $\supp(b)=\{ig\mid i\in \N\}$, i.e.\ $b$ has countably infinite support. Since $|\cF|\leq |K|$, also $\cF$ is countable. However, the powerset $\cP(\supp(b))$ is uncountable, whence $\cP(\supp(b))\setminus \cF\neq \emptyset$. This shows that $K$ does not have the property \ref{subsets}.
	\end{es}

\noindent
By \thref{thm:main} and \thref{prop:field2}, we immediately obtain the following.

\begin{cor}
	Suppose that $\mathrm{char}(k)=0$. Then $k\pow{\cF}$ is a Rayner field (or, equivalently, a Hahn field) if and only if  $\cF$ satisfies all of \thref{conditions}.\qed
\end{cor}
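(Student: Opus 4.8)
The plan is to combine \thref{thm:main} with \thref{prop:field2}~(ii), which together reduce the statement to a purely combinatorial comparison of two lists of conditions. Since $\Char(k)=0$, \thref{thm:main} tells us that $k\pow{\cF}$ is a Rayner field if and only if it is a Hahn field, and \thref{prop:field2}~(ii) tells us that $k\pow{\cF}$ is a Hahn field exactly when $\cF$ satisfies \ref{singletons}, \ref{subsets}, \ref{union}, \ref{sums} and \ref{semigroup}. It therefore suffices to prove that this list of five conditions is equivalent to the full list of eleven conditions gathered in \thref{conditions}. The backward implication is immediate, since the five Hahn-field conditions are among the eleven.

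For the forward implication I would assume \ref{singletons}, \ref{subsets}, \ref{union}, \ref{sums} and \ref{semigroup}, and then derive each of the remaining six conditions, mostly by quoting the explicit implications recorded in the opening remark. In detail: \ref{zero}, \ref{nonempty}, \ref{generated-group} and \ref{negatives} all follow from \ref{singletons} (because $\{0\}\in\cF$, so $\cF\neq\emptyset$; because $\bigcup_{A\in\cF}A=G$ already generates $G$; and because every singleton, in particular $\{-g\}$, lies in $\cF$); and \ref{initial} follows from \ref{subsets}, as any initial segment of $A$ is in particular a subset of $A$. The only remaining step with any algebraic content is \ref{translations}, which I would obtain by writing $A+g=A\oplus\{g\}$ and applying \ref{sums}, using that $\{g\}\in\cF$ by \ref{singletons}. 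Note that this comparison of condition-lists is insensitive to whether $G$ is trivial, since the trivial case is already handled internally by \thref{thm:main} and \thref{prop:field2}.

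I do not expect any genuine obstacle here: the substantive work has already been carried out in \thref{thm:main} and \thref{prop:field2}, and this corollary amounts to matching the five Hahn-field conditions against the eleven conditions of \thref{conditions} using the elementary implications \ref{singletons} $\Rightarrow$ \ref{zero} $\Rightarrow$ \ref{nonempty}, \ref{subsets} $\Rightarrow$ \ref{initial}, \ref{singletons} $\Rightarrow$ \ref{generated-group} and \ref{singletons} $\Rightarrow$ \ref{negatives}, together with the one-line derivation of \ref{translations} from \ref{sums} and \ref{singletons}.
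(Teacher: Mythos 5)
Your proposal is correct and matches the paper's intended argument: the paper derives this corollary immediately from \thref{thm:main} and \thref{prop:field2}, leaving exactly the routine comparison of condition lists that you carry out (the five Hahn-field conditions imply the remaining six via \ref{singletons} $\Rightarrow$ \ref{zero} $\Rightarrow$ \ref{nonempty}, \ref{singletons} $\Rightarrow$ \ref{generated-group}, \ref{singletons} $\Rightarrow$ \ref{negatives}, \ref{subsets} $\Rightarrow$ \ref{initial}, and $A+g=A\oplus\{g\}$ for \ref{translations}). No gaps.
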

		
	\begin{acknowledgements} 
		We thank Mickaël Matusinski for several helpful comments and corrections.	
	\end{acknowledgements}
	
	\begin{footnotesize}
		
	\end{footnotesize}	
	

\begin{thebibliography}{99}			
			\bibitem{kuhlmannmatusinskimantova}
			\textsc{A.~Berarducci}, \textsc{S.~Kuhlmann}, \textsc{V.~Mantova} and \textsc{M.~Matusinski},
			`Exponential fields and Conway's omega-map',
			\textsl{Proc.\ Amer.\ Math.\ Soc.}, to appear.
				
			\bibitem{biljakovic} 
			\textsc{D.~Biljakovic}, \textsc{M.~Kochetov} and \textsc{S.~Kuhlmann},
			`Primes and irreducibles in truncation integer parts of real closed fields', 
			\textsl{Logic in Tehran}, Lect. Notes Log. 26 (eds A.~Enayat, I.~Kalantari and M.~Moniri; Assoc. Symbol. Logic, La Jolla, CA, 2006) 42--65.
			
			\bibitem{fornasiero}
			\textsc{A.~Fornasiero}, \textsc{F.-V.~Kuhlmann} and \textsc{S.~Kuhlmann},
			`Towers of complements to valuation rings and truncation closed embeddings of valued fields', 
			\textsl{J. Algebra} 323 (2010) 574--600,
			doi:j.jalgebra.2009.11.023.
			
			\bibitem{hahn}
			\textsc{H.~Hahn},
			`Über die nichtarchimedischen Grössensysteme', 
			\textsl{Kais.\ Akad.\ Wiss.\ Wien, S.-B.\ IIa} 116 (1907) 601--655.
			
			\bibitem{kuhlmann}
			\textsc{S.~Kuhlmann} and \textsc{S.~Shelah},
			`$\kappa$-bounded exponential-logarithmic power series fields',
			\textsl{Ann.\ Pure Appl.\ Logic} 136 (2005) 284--296,
			doi:10.1016/j.apal.2005.04.001.
			
			\bibitem{mourgues}
			\textsc{M.~H.~Mourgues} and \textsc{J.~P.~Ressayre},
			`Every real closed field has an integer part',
			\textsl{J.\ Symbolic Logic} 58 (1993) 641--647,
			doi:10.2307/2275224.
						
			\bibitem{neumann} 
			\textsc{B.~H.~Neumann},
			`On ordered division rings',
			\textsl{Trans.\ Amer.\ Math.\ Soc.} 66 (1949), 202--252, 
			doi:10.1090/S0002-9947-1949-0032593-5.

			\bibitem{rayner} 
			\textsc{F.~J.~Rayner},
			`An algebraically closed field',
			\textsl{Glasgow Math.\ J.\ }9 (1968), 146--151, 
			doi:10.1017/S0017089500000422.


				
		\end{thebibliography}
\end{document}